\def\cC{\mathcal C}
\def\cL{\mathcal L}
\def\cM{\mathcal M}
\def\cP{\mathcal P}
\def\cR{\mathcal R}
\def\cV{\mathcal V}
\def\cX{\mathcal X}
\def\deg{\mbox{\rm deg}}
\def\min{{\rm min}}
\def\dim{\mbox{\rm dim}}
\newcommand{\fq}{{\mathbb F}_q}
\begin{document}
%%%%%%%%%%%%%%%%%%%%%%%%%%%%%%%%%%%%%%%%%%%%%%%%%%%%%%%%%%%%%%%%%%%%%
%\Logo{BCRI--CGC--preprint, http://www.bcri.ucc.ie}

\begin{frontmatter}

\title{Generalized AG codes as evaluation codes}

{\author{Marco Calderini}}
{\tt{(marco.calderini@unitn.it) }}\\
{{Department of Mathematics,
 University of Trento, Italy }}

{\author{Massimiliano Sala}} {\tt{(maxsalacodes@gmail.com)}}\\
{{Department of Mathematics,
 University of Trento, Italy }}

\runauthor{M.~Calderini, M.~Sala}

\begin{abstract}
We extend the construction of GAG codes to the case of evaluation codes. We estimate the minimum distance of these extended evaluation codes and we describe the connection to the one-point GAG codes.
\end{abstract}

\begin{keyword}
Evaluation codes, Affine-variety codes, AG codes, Generalized AG codes
\end{keyword}
\end{frontmatter}
%==================================================================

\section{Introduction}
\label{intro}

In 1999, Xing, Niederreiter and Lam proposed \cite{CGC-cd-art-niedlxingam99,CGC-cd-art-xingniedlam99} two constructions of linear codes based on algebraic curves using points of arbitrary degree. These generalize the construction of Algebraic Geometry (AG) codes introduced by Goppa \cite{CGC-cd-art-goppa81,CGC-cd-art-goppa82}. \"Ozbudak and Stichtenoth \cite{CGC-cd-art-ozbsti99} showed that there is essentially only one new construction, namely that of Generalized Algebraic Geometric (GAG) codes, and introduced the notion of designed minimum distance for GAG codes.

Until now several papers have studied GAG codes in an algebraic geometry way, see e.g. \cite{CGC-cd-art-heydtmann02}, \cite{CGC-cd-art-dinniexin00}, \cite{CGC-cd-art-calfai12}, \cite{CGC-alg-art-xingyeo07}.

H{\o}holdt, van Lint and Pellikaan \cite{CGC-cd-book-AG_HB}  founded the theory of order domains and of the order domain codes (or evaluation codes) to simplify the description of one-point AG codes. The minimum distance  of evaluation codes can be found by applying bound that relies only on some relatively simple theory \cite{CGC-cd-book-AG_HB}.

Affine-variety codes, introduced by Fitzgerald and Lax in \cite{CGC-cd-art-lax}, are particularly interesting for their parameters and for a new efficient decoding system \cite{CGC-cd-prep-manumaxchiara12}. \cite{CGC-cd-art-geil08} presents the AG codes as an example of affine-variety codes and their relation with evaluation codes. 

In this paper we  will extend the construction of affine-variety codes to introduce the GAG codes as a particular example of these family of codes. We extend, also, the construction of the evaluation codes and we analyze a particular case of the one-point GAG codes into the setting of these new codes. 
The remainder of this paper contains the following sections.
\begin{itemize}

\item[-] In Section \ref{sec:1} we recall definitions and theorems about the minimum distance for affine-variety codes, order domain codes and generalized algebraic geometric codes.
\item[-] In Section \ref{sec:6} we introduce two constructions of linear codes, the extended affine-variety codes and the extended order domain codes, and we estimate a lower bound on the minimum distance for these families of codes.

\item[-] In section \ref{sec:9} we analyze the relation between an extended order domain code and a GAG code constructed from a rational point and we compare the relevant bounds on the minimum distance of the code.
\end{itemize}

\section{Preliminaries}
\label{sec:1}

\subsection{Affine-variety codes}
\label{sec:2}

Let $I \subseteq \fq[X_1,\dots,X_m]$ be an ideal, we define
$$
I_q=I+\langle X_1^q-X_1,\dots,X_m^q-X_m\rangle
$$
$$
R_q=\fq[X_1,\dots,X_m]/I_q
$$

Let
 $$
 V=\{P_1,\dots,P_n\}=\mathcal{V}_{\fq}(I)=\mathcal{V}_{\overline{\FF}_q}(I_q)
 $$ 
 be the variety of $I$ over $\fq$. Here $\overline{\FF}$ means the algebraic closure of the field $\FF$.
 
 Define the evaluation map $ev:R_q\to \fq^n$, the $\fq$-linear map such that 
\begin{equation}\label{eq:ev}
 ev(f+I_q)=(f(P_1),\dots,f(P_n)).
\end{equation}
The evaluation map is a vector space isomorphism.

\begin{definition}
Let $L$ be an $\fq$- vector subspace of $R_q$. We define the affine variety code 
$$
C(I,L) = ev(L).
$$
\end{definition}

The notation of this subsection comes from \cite{CGC-cd-art-lax}, where also the code $C(I,L)^\perp$ is called an affine-variety code. In this paper we will not consider this type of codes.

\subsection{Order domain conditions}
\label{sec:3}

Let $J\subseteq \FF[X_1,\dots,X_m]$ be an ideal and let $\prec$ be a fixed monomial ordering. Denote by $\mathcal{M}(X_1,\dots , X_m)$ the set of all monomials in the variables $X_1,\dots, X_m$. The footprint of $J$ (or Hilbert staircase) with respect to $\prec$ is the set
$$
\Delta_\prec(J) = \{m \in \mathcal{M}(X_1,\dots , X_m) \,|\, m \mbox{ is not the leading monomial
of any polynomial in } J\}.
$$

\begin{definition}
Let $I\subseteq \FF[X_1,\dots , X_m]$ be an ideal. Let $\prec_w$ be a generalized weighted degree ordering, $w:\cM\to\NN_0^r$. Assume $I$ possesses a Gr\"obner basis $\mathcal G$ such that:
\begin{itemize}
\item[(i)] any $g\in\mathcal G$ has exactly two monomials of highest weight in its support.
\item[(ii)]  no two monomials in $\Delta_{\prec_w}(I)$ are of the same weight.
\end{itemize}
 Then we say that $(I,\prec_w)$ satisfies the order domain conditions.
\end{definition}

Let $L\subseteq R_q$ be a subspace. By using Gaussian elimination any basis of $L$ can be transformed into a basis of the following form.

\begin{definition}
Let $\prec$ be a fixed monomial ordering and $k=\dim(L)$. A basis $\{b_1 +I_q, \dots , b_{k} +I_q\}$ for $L$ such that $Supp(b_i)\subseteq\Delta_{\prec}(I_q)$ for $i = 1,\dots,k$ and $\mathrm{lm}(b_1) \prec \dots\prec \mathrm{lm}(b_{k})$ is said to be well-behaving with respect to $\prec$. Here $\mathrm{lm}(f) $ means the leading monomial of $f$.
\end{definition}

The sequence $(\mathrm{lm}(b_1),\dots , \mathrm{lm}(b_k))$ is the same for all choices of well-behaving basis of $L$. So we define the set
$$
\square_\prec(L)=\{\mathrm{lm}(b_1),\dots , \mathrm{lm}(b_k)\}.
$$

\begin{definition}
 Assume $I$ and $\prec_w$ satisfy the order domain conditions. Let $\Gamma = w(\Delta_{\prec_w} (I))\subseteq\NN_0^r$ and $\Delta=\Delta_{\prec_w} (I_q)$. For any $\lambda\in w(\Delta)$ we define
 $$
\sigma_\Delta(\lambda)=\sigma(\lambda) =  |\{\eta\in w(\Delta) \,|\, \eta-\lambda \in \Gamma\}|.
 $$
\end{definition}

\begin{theorem}[Th. 4.27 in \cite{CGC-cd-art-geil08}]\label{th:dist}
Assume $(I,\prec_w)$ satisfies the order domain condition and let $L$ subspace of $R_q$ with $\{b_1 +I_q, \dots , b_{\dim(L)} +I_q\}$ well-behaving basis. Then the minimum distance of $C(I,L)$ is at least
$$
\min\{\sigma(w(\alpha)) \,|\, \alpha \in \square_{\prec_w}(L)\}.
$$

\end{theorem}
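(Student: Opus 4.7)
The plan is to bound, for each nonzero $f+I_q\in L$, the number of zeros of $f$ on $V$ using the Gr\"obner footprint of $I_q+\langle f\rangle$, and then to exploit the order domain conditions to produce many monomials missing from that footprint. Let $c=ev(f)$ be a nonzero codeword; expressing $f$ in the given well-behaving basis and taking the highest index with nonzero coefficient, we may assume $\mathrm{lm}(f)=\alpha\in\square_{\prec_w}(L)$ with $\mathrm{supp}(f)\subseteq\Delta$. Since $ev$ is a vector space isomorphism, the Hamming weight equals $n-|Z(f)\cap V|$, and the classical footprint bound gives
\[
|Z(f)\cap V|\le|\Delta_{\prec_w}(I_q+\langle f\rangle)|\le|\Delta|=n.
\]
It therefore suffices to exhibit at least $\sigma(w(\alpha))$ monomials of $\Delta$ that do \emph{not} lie in $\Delta_{\prec_w}(I_q+\langle f\rangle)$.

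The heart of the argument is to produce, for every $\eta\in w(\Delta)$ with $\eta-w(\alpha)\in\Gamma$, an element $h_\eta\in I_q+\langle f\rangle$ whose leading monomial is the (unique, by condition (ii)) monomial $m_\eta\in\Delta$ of weight $\eta$. Since $\eta-w(\alpha)\in\Gamma=w(\Delta_{\prec_w}(I))$, pick the monomial $\mu\in\Delta_{\prec_w}(I)$ with $w(\mu)=\eta-w(\alpha)$; uniqueness again follows from (ii). Consider $\mu\cdot f$: its leading term is $\mu\alpha$, of weight $\eta$. Reducing $\mu f$ modulo the Gr\"obner basis $\mathcal G$ of $I$, condition (i) ensures that every reduction step substitutes the highest-weight monomial by another monomial of the same weight $\eta$ (because each $g\in\mathcal G$ has exactly two highest-weight terms). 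A further reduction modulo the field equations $X_i^q-X_i$ drives the support inside $\Delta$. By condition (ii), at most one monomial of weight $\eta$ can survive in $\Delta_{\prec_w}(I)$, so the final polynomial $h_\eta$ has leading monomial $m_\eta$ as required, while $h_\eta\equiv\mu f\pmod{I_q}$ places it in $I_q+\langle f\rangle$.

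Since the monomials $m_\eta$ corresponding to the $\sigma(w(\alpha))$ weights counted by $\sigma$ are pairwise distinct (again by (ii)), we conclude
\[
|Z(f)\cap V|\le n-\sigma(w(\alpha)),
\]
so $\mathrm{wt}(c)\ge\sigma(w(\alpha))\ge\min\{\sigma(w(\beta)):\beta\in\square_{\prec_w}(L)\}$, which is the desired bound after minimizing over all nonzero codewords.

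The main obstacle is the reduction argument in the middle paragraph: one must verify that neither the reduction by $\mathcal G$ nor the subsequent reduction by the field equations can destroy the highest-weight contribution inherited from $\mu\alpha$. Condition (i) handles the first reduction essentially by design, but the second step requires that the monomials $X_i^q$ have weight strictly above every surviving monomial of $\Delta_{\prec_w}(I)$ of weight $\eta$, which is what ultimately guarantees that a single monomial $m_\eta\in\Delta$ emerges as the leading term of $h_\eta$.
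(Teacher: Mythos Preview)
The paper does not supply its own proof of this statement: it is quoted verbatim as Theorem~4.27 of Geil's survey and used as a black box in Section~\ref{sec:1}. Your argument is the standard footprint-bound proof and is essentially what one finds in that reference.

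One remark on the doubt you raise in the last paragraph. The worry about the field-equation reduction is unfounded, and you do not need any extra hypothesis on the weights of $X_i^q$. The cleanest way to see this is to use what conditions (i)--(ii) are really encoding: they make $R=\FF[X_1,\dots,X_m]/I$ into an order domain with weight function $\rho(h+I)=w(\mathrm{lm}(\bar h))$, where $\bar h$ is the normal form modulo $I$, and this $\rho$ is multiplicative. Hence $\rho(\mu f+I)=w(\mu)+w(\alpha)=\eta$, so the normal form of $\mu f$ modulo $I$ already has leading monomial equal to the unique monomial of weight $\eta$ in $\Delta_{\prec_w}(I)$; since $\eta\in w(\Delta)$, that monomial is precisely $m_\eta\in\Delta=\Delta_{\prec_w}(I_q)$. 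Now reduce further modulo a Gr\"obner basis of $I_q$: the leading monomial $m_\eta$ lies in $\Delta$ and is therefore untouched, while every other term is strictly smaller and its reductions remain strictly smaller. So $h_\eta$ has leading monomial $m_\eta$ automatically. Your step-by-step reduction description (``each step replaces the top monomial by another of the same weight'') is morally right but a bit delicate because intermediate stages could in principle carry several monomials of weight $\eta$ that might cancel; invoking the multiplicativity of the weight function on $R$ sidesteps this entirely.
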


\begin{remark}\label{rem:subset}
Assume that the pair $(I,\prec_w)$ satisfies the order domain conditions. Let $U \subseteq \mathcal{V}_{\fq} (I)$. Every finite set of points is a variety and therefore there exists polynomials $h_1,\dots , h_r$ such that the vanishing ideal of $U$ equals
$$
I_U =I+\langle h_1,\dots,h_r\rangle.
$$
The estimates of the minimum distances of $C(I,L)$ can be adapted if these codes are made by evaluating in $U$ rather than in the entire variety, but we need to replace $I_q$ with $I_U$.
\end{remark}
%\paragraph{Paragraph headings} Use paragraph headings as needed.

\subsection{Weight functions and order domains}
\label{sec:4}

The concept of a weight function was introduced by H{\o}holdt et al. in \cite{CGC-cd-book-AG_HB} to simplify the treatment of one-point geometric AG codes and to propose a generalization to objects of higher dimensions than curves.

Let $(R,\rho,\Gamma)$ be an order domain, where $\Gamma\subseteq \NN^r$ is a semigroup and $\rho:R\to \Gamma\cup\{-\infty\}$ is a weight function.

From \cite{CGC-alg-art-geipel02}[Th. 10.4] we know that every order domain with a finitely generated semigroup, $\Gamma$, can be constructed as a factor ring, $\FF[X_1,\dots,X_m]/I$. Therefore it can be described in the language of Gr\"obner basis theory.

\begin{definition}\label{def:mor}
Let $R$ be an $\fq$-algebra. A surjective map $\phi : R \to \fq^n$ is called a morphism of $\fq$-algebras if $\phi$ is $\fq$-linear and if 
$$
\phi(fg)=\phi(f)* \phi(g)
$$
for all $f, g \in R$. Here $*$ is the component-wise product.
\end{definition}

\begin{definition}\label{def:del}
Let $(R,\rho,\Gamma)$ be an order domain over $\fq$ and $\{f_\lambda\, |\,\rho(f_\lambda)=\lambda, \lambda\in\Gamma\}$ be a basis. Let $\phi : R\to \fq^n$ be a morphism as in Definition \ref{def:mor}. Define $\alpha(1) = 0$. For $i = 2,\dots,n$ define recursively $\alpha(i)$ to be the smallest element in $\Gamma$ that is greater than $\alpha(1),\dots , \alpha(i-1)$ and satisfies
$$
\phi(f_{\alpha(i)})\notin Span_{\fq}\{\phi(f_\lambda)\,|\,\lambda\prec_{\NN^r} \alpha(i)\}. 
$$
Write $\Delta(R, \rho, \phi) =\{\alpha(1),\dots , \alpha(n)\}$.
\end{definition}

\begin{definition}
Let $R$ be an order domain over $\fq$ and let $\phi$ be a morphism . Fix a basis $\{f_\lambda\, |\,\rho(f_\lambda)=\lambda, \lambda\in\Gamma\}$ and let $\Delta=\Delta(R, \rho, \phi)$. For $\lambda\in\Gamma$ and $\delta\in\NN$ consider the codes
$$
E(\lambda) = Span_{\fq}\{\phi(f_\eta)\,|\,\eta\preceq_{\NN^r}\lambda\}
$$
$$
\tilde{E}(\delta) = Span_{\fq}\{\phi(f_\eta)\,|\,\eta\in\Delta\mbox{ and }\sigma_\Delta(\eta)\geq\delta\}.
$$

\end{definition}

\begin{theorem}[Th. 2 in \cite{CGC-cd-art-geil09}]\label{th:boundord}
The minimum distance of $E(\lambda$) is at least
$$
\min\{\sigma_\Delta(\eta)\,|\,\eta\preceq_{\NN^r}\lambda\}
$$
and the minimum distance of $\tilde{E}(\delta)$ is at least $\delta$.

\end{theorem}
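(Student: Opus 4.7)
The plan is to give a single Feng--Rao style argument that establishes both bounds simultaneously, exploiting the two defining identities of an order-domain morphism: $\rho(fg)=\rho(f)+\rho(g)$ and $\phi(fg)=\phi(f)\ast\phi(g)$.

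First I would verify that $\{\phi(f_\eta):\eta\in\Delta\}$ is an $\fq$-basis of $\fq^n$; by construction $|\Delta|=n$, and linear independence is built into Definition~\ref{def:del}. A $\prec_{\NN^r}$-induction using the same selection rule then shows that $\phi(f_\mu)\in Span_{\fq}\{\phi(f_\eta):\eta\in\Delta,\ \eta\prec_{\NN^r}\mu\}$ for every $\mu\in\Gamma\setminus\Delta$, so in particular $E(\lambda)=Span_{\fq}\{\phi(f_\eta):\eta\in\Delta,\ \eta\preceq_{\NN^r}\lambda\}$. Any nonzero codeword $c$ in $E(\lambda)$ or in $\tilde E(\delta)$ therefore has a unique $\Delta$-expansion $c=\sum a_\eta\phi(f_\eta)$, and I would let $\mu$ be the $\prec_{\NN^r}$-largest index with $a_\mu\neq 0$. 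Setting $f=\sum a_\eta f_\eta$ gives $c=\phi(f)$ with $\rho(f)=\mu$.

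The heart of the proof is to exhibit $\sigma_\Delta(\mu)$ vectors in $\fq^n$ that are both linearly independent and supported inside $\mathrm{supp}(c)$. I would put $B=\{\beta\in\Gamma:\mu+\beta\in\Delta\}$, observe that $|B|=\sigma_\Delta(\mu)$, and for each $\beta\in B$ form $c\ast\phi(f_\beta)=\phi(ff_\beta)$. Since $\rho(ff_\beta)=\mu+\beta$, the product $ff_\beta$ equals $\gamma_\beta f_{\mu+\beta}$ plus terms of strictly smaller $\rho$-weight, with $\gamma_\beta\in\fq^\ast$. Feeding this through $\phi$ and using the inductive identity from the previous paragraph yields $\phi(ff_\beta)=\gamma_\beta\phi(f_{\mu+\beta})+v_\beta$ with $v_\beta\in Span_{\fq}\{\phi(f_\eta):\eta\in\Delta,\ \eta\prec_{\NN^r}\mu+\beta\}$. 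The indices $\mu+\beta$ are pairwise distinct elements of $\Delta$, so the family $\{c\ast\phi(f_\beta):\beta\in B\}$ is upper-triangular in the $\Delta$-basis and hence linearly independent.

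Writing $S$ for the support of $c$, each vector $c\ast\phi(f_\beta)$ vanishes off $S$, so $\sigma_\Delta(\mu)=|B|$ independent vectors embed into the $|S|$-dimensional coordinate subspace, forcing $|S|\geq\sigma_\Delta(\mu)$. For $c\in E(\lambda)$ the index $\mu$ satisfies $\mu\preceq_{\NN^r}\lambda$, yielding the first bound; for $c\in\tilde E(\delta)$ the index $\mu$ is one of the generating indices and so $\sigma_\Delta(\mu)\geq\delta$, yielding the second. The delicate step I expect to be the hardest to write cleanly is the expansion producing the triangular structure: it relies crucially on the sharpness of $\rho(fg)=\rho(f)+\rho(g)$ to guarantee that the leading coefficient $\gamma_\beta$ is nonzero, and on the induction of the first paragraph to reroute the lower-order remainder entirely into the $\Delta$-basis so that the leading positions $\mu+\beta$ cannot be cancelled by contributions coming from $\eta\notin\Delta$.
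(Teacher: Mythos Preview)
The paper does not supply its own proof of Theorem~\ref{th:boundord}: the result is quoted verbatim from \cite{CGC-cd-art-geil09} and is used only as a black box. Consequently there is no in-paper argument to compare your proposal against.

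That said, your proposal is correct and is precisely the standard Feng--Rao/order-bound argument that underlies the cited result. The three ingredients you isolate---that $\{\phi(f_\eta):\eta\in\Delta\}$ is a basis of $\fq^n$, that for $\mu\notin\Delta$ one has $\phi(f_\mu)\in Span_{\fq}\{\phi(f_\eta):\eta\in\Delta,\ \eta\prec_{\NN^r}\mu\}$, and that the products $c\ast\phi(f_\beta)$ for $\beta\in B$ are upper-triangular in the $\Delta$-basis---are exactly what is needed, and your identification of the delicate point (nonvanishing of the leading coefficient $\gamma_\beta$ via $\rho(fg)=\rho(f)+\rho(g)$, together with rerouting lower-order terms into $\Delta$) is accurate. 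Both conclusions then follow as you indicate: for $E(\lambda)$ the leading index $\mu$ lies in $\Delta$ with $\mu\preceq_{\NN^r}\lambda$, and for $\tilde E(\delta)$ the leading index $\mu$ is one of the defining indices, hence $\sigma_\Delta(\mu)\geq\delta$.
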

\subsection{GAG codes}
\label{sec:5}

Let $\cX$ be a projective, geometrically irreducible, non-singular algebraic curve defined over the finite field $\FF_{q}$. Let $g$ be the genus of $\cX$. Let $\Phi$ be the Frobenius map on $\cX$, namely the map sending a point $P$ with homogeneous coordinates $(a_0,\ldots,a_r)$ to the point $\Phi(P)$ with coordinates $(a_0^q,\ldots,a_r^q)$. 

Let $P$ be a point of $\cX$. Then $\deg(P)$ denotes the degree of $P$, namely  the least positive integer $n$ such that $P$ is $\FF_{q^n}$-rational, and the closed point of $P$ is the set $O_\Phi(P)=\{P,\Phi(P),\ldots, \Phi^{n-1}(P)\}$.

Let $\cX$ be a curve, let $P_{1},\ldots,P_{s}$ be points of $\cX$ such that for every $i\neq j$ the closed points $O_\Phi(P_i)$ and $O_\Phi(P_j)$ are disjoint. 
Let  $G$ be an $\fq$-rational divisor that has support disjoint from any closed point $O_\Phi(P_i)$.
Let $k_i:=\deg(P_i)$. For $i=1,\ldots, s$ let $\pi_i:\FF_{q^{k_i}}\to C_i$ be an $\fq$-linear isomorphism from   
the finite field $\FF_{q^{k_i}}$ onto a linear $[n_i,k_i,d_i]$ code $C_i\subseteq \fq^{n_i}$. 

\begin{definition}
Let $n=\sum_{i=1}^s n_i$, and consider the $\fq$-linear map
$$
\pi:\left\{ 
\begin{array}{ccc}
\cL(G) &\to & \fq^n\\
f&\mapsto & (\pi_1(f(P_1),\ldots,\pi_s(f(P_s)))
\end{array}
\right.
$$
The image of $\pi$ is a Generalized Algebraic Geometric code
$$
C(P_1,\ldots,P_s;G;C_1,\ldots,C_s)=\pi(\cL(G)).
$$
Here $\cL(G)$ denotes the Riemann-Roch space of $G$ over $\fq$.
\end{definition}

The designed minimum distance $\bar d$ of $C(P_1,\ldots,P_s;G;C_1,\ldots,C_s)$ is defined as follows (see \cite{CGC-cd-art-ozbsti99}):
let 
$$
X= \bigg\{ S\subseteq \{1,\ldots,s\}\mid \sum_{i\in S} k_i \leq \deg (G)\bigg\}.
$$
Then 
$$
\bar d:=\min\bigg\{ \sum_{i\notin S} d_i \mid S\in X \bigg\}
$$

\begin{proposition}[Prop. 4.1 in \cite{CGC-cd-art-ozbsti99}]\label{prop:XNL}
If $\,\sum_{i=1}^s k_i > \deg (G)$, then $C(P_1,\ldots,P_s;G;C_1,\ldots,C_s)$ is an $[n,k,d]$ code with parameters
$$
k=\dim(\cL(G))\ge \deg (G)+1-g\quad \text{and} \quad d\ge \bar d.
$$

\end{proposition}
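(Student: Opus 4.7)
The plan is to verify the two claimed properties of $\pi$ separately: injectivity (which gives the dimension statement) and a lower bound on the weight of any nonzero codeword (which gives the distance statement). Both arguments exploit the fact that evaluation of an $\fq$-rational $f\in\cL(G)$ at a point $P_i$ of degree $k_i$ carries the information of vanishing along the whole closed point $O_\Phi(P_i)$, which is a divisor of degree $k_i$ disjoint from $\supp(G)$.

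\textbf{Dimension.} I would show that the hypothesis $\sum_{i=1}^s k_i>\deg(G)$ forces $\pi$ to be injective. Suppose $\pi(f)=0$ for some $f\in\cL(G)$; since each $\pi_i$ is an $\fq$-linear isomorphism onto $C_i$, this means $f(P_i)=0$ for every $i$. Because $f$ is defined over $\fq$, it commutes with the Frobenius, so vanishing at $P_i$ entails vanishing at every conjugate $\Phi^j(P_i)$, i.e.\ $f$ vanishes along the effective divisor $D:=\sum_{i=1}^s O_\Phi(P_i)$ of degree $\sum_{i=1}^s k_i$. Since $\supp(D)\cap\supp(G)=\emptyset$, one has $f\in\cL(G-D)$, and nontriviality of this space requires $\deg(G-D)\ge 0$, contradicting the hypothesis. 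Hence $\pi$ is injective and $k=\dim \cL(G)$; the inequality $k\ge\deg(G)+1-g$ is then just Riemann's theorem.

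\textbf{Distance.} Let $c=\pi(f)\ne 0$ be a codeword and set
\[
S=\{\,i\in\{1,\dots,s\}\mid f(P_i)=0\,\}.
\]
For every $i\notin S$ the block $\pi_i(f(P_i))$ is a nonzero element of $C_i$ and therefore contributes at least $d_i$ to the Hamming weight of $c$; blocks with $i\in S$ contribute nothing but are harmless. Hence
\[
\mathrm{wt}(c)\ \ge\ \sum_{i\notin S} d_i.
\]
To conclude $\mathrm{wt}(c)\ge\bar d$ it suffices to prove $S\in X$, that is $\sum_{i\in S}k_i\le\deg(G)$. This is exactly the same argument as in the injectivity step applied to the subdivisor $D_S:=\sum_{i\in S}O_\Phi(P_i)$: Frobenius-equivariance of $f$ combined with $f(P_i)=0$ for $i\in S$ gives $f\in\cL(G-D_S)$, and since $f\ne 0$ we must have $\deg(G-D_S)\ge 0$.

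\textbf{Main obstacle.} The only non-routine point is the transition \emph{vanishing at $P_i\Rightarrow$ vanishing along the whole closed point $O_\Phi(P_i)$}, together with the verification that $f\in\cL(G-D_S)$; both rely on $f$ being $\fq$-rational and on the assumed disjointness $\supp(G)\cap\supp(O_\Phi(P_i))=\emptyset$, which ensures that subtracting $D_S$ from $G$ genuinely imposes the extra vanishing conditions and does not interact with the pole orders prescribed by $G$. Once this is in place, both the dimension and distance statements reduce to degree bookkeeping on $\cL(G-D)$ and the generalized Singleton-type count over the blocks.
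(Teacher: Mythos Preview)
Your argument is correct and is precisely the standard proof of this result. Note, however, that the present paper does not supply its own proof of Proposition~\ref{prop:XNL}: it simply quotes the statement from \cite{CGC-cd-art-ozbsti99}, so there is no in-paper proof to compare against. What you wrote is exactly the line taken in the original reference: injectivity of $\pi$ via $f\in\cL(G-D)$ with $D=\sum_i O_\Phi(P_i)$ and $\deg D>\deg G$, and the distance bound via $f\in\cL(G-D_S)$ forcing $S\in X$, followed by the blockwise estimate $\mathrm{wt}(c)\ge\sum_{i\notin S}d_i$. The point you flag as the ``main obstacle'' (Frobenius-equivariance of $f$ giving vanishing along the full closed point, together with the disjointness of $\supp(G)$ from the $O_\Phi(P_i)$) is indeed the only thing requiring a word of justification, and you handle it correctly.
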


Throughout this paper, the codes $C_i$ will be called the inner codes of the GAG code.

\begin{remark}\label{oss:dist}
If we construct the GAG code using $P_1,\dots,P_s$ points of which $h$ are $\fq$-rational, a divisor $G$ with $\deg(G)\leq h$ and inner codes having minimum distance all equals to $1$, then the designed minimum distance is equal to $s-\deg(G)$.
\end{remark}

\section{New construction of codes}
\label{sec:6}

For any $v \in \fq^n$, let $\mathrm{w}_H(v)=|\{i\,|\,v_i\neq 0\}|$.
\subsection{Extended Affine-variety codes}
\label{sec:7}

Let $(I,\prec_w)$ satisfying the order domain condition and let $\cP=\{P_1,\dots,P_h\}\subseteq\mathcal{V}_{\overline{\FF}_q}(I)$, with $\deg (P_i)=r_i$ for $i=1,\dots,h$. As in Remark \ref{rem:subset} there is an ideal $J\subseteq\fq[X_1,\dots,X_m]$ such that $\cP=\mathcal{V}_{\overline{\FF}_q}(I+J)$. Let $I+J=I_\cP$.

Let $L$ be a space over $\fq$ with well-behaving basis $B=\{b_1 +I_\cP, \dots , b_k +I_\cP\}$, and for $i=1,\dots,h$ let $\pi_i:\FF_{q^{r_i}}\to C_i$ be an $\fq$-linear isomorphism from the finite field $\FF_{q^{r_i}}$ onto the inner code $C_i$ over $\fq$ with parameters $[n_i,r_i,d_i]$. 

\begin{definition}
Let $n=\sum_{i=1}^h n_i$, $\cP=\{P_1,\dots,P_h\}$ and $\cC=\{C_1,\dots,C_h\}$. Consider the $\fq$-linear map, 
$$
\overline{ev}:\left\{ 
\begin{array}{ccc}
L &\to & \fq^n\\
f&\mapsto & (\pi_1(f(P_1),\ldots,\pi_h(f(P_h)))
\end{array}
\right.
$$
Then the extended affine-variety code is
$$
\overline{ev}(L)=C(I,L,\cP,\cC).
$$
\end{definition}

\begin{theorem}\label{th:gagav}
Let $\Delta=\Delta_{\prec_w}(I_\cP)$, then $C(I,L,\cP,\cC)$ has minimum distance at least
$$
\delta\hat{d},
$$

where $\delta=\min\{\sigma(w(\alpha)\,|\,\alpha \in \square(L)\}$ and $\hat{d}=\min\{d_1,\dots,d_h\}$.

\end{theorem}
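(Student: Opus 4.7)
The plan is to mimic the standard concatenated-code argument that splits a codeword into its inner blocks and combines a lower bound on the number of nonzero blocks with a lower bound on the weight of each nonzero block. Concretely, fix a nonzero $f+I_\cP\in L$ and let $\mathbf{c}=\overline{ev}(f)=(\pi_1(f(P_1)),\ldots,\pi_h(f(P_h)))$. We must show $\mathrm{w}_H(\mathbf{c})\ge \delta\hat d$.

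First I would handle the ``inner'' contribution. Since each $\pi_i$ is an $\fq$-linear isomorphism from $\FF_{q^{r_i}}$ onto $C_i$, the $i$-th block $\pi_i(f(P_i))$ is zero if and only if $f(P_i)=0$, and whenever it is nonzero it is a nonzero codeword of $C_i$ of Hamming weight at least $d_i\ge\hat d$. Consequently
$$
\mathrm{w}_H(\mathbf{c})\ \ge\ \hat d\cdot \#\{i\in\{1,\ldots,h\}\,:\,f(P_i)\neq 0\}.
$$

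Next I would bound the number of points of $\cP$ where $f$ does not vanish. This count is exactly the Hamming weight of the ordinary evaluation vector $(f(P_1),\ldots,f(P_h))$ associated with the affine-variety code obtained by evaluating $L$ on the full variety $\cP=\mathcal{V}_{\overline{\FF}_q}(I_\cP)$. Applying Theorem~\ref{th:dist} (in the form indicated by Remark~\ref{rem:subset}, now with $\Delta=\Delta_{\prec_w}(I_\cP)$ playing the role previously played by $\Delta_{\prec_w}(I_q)$, and using the given well-behaving basis $\{b_1+I_\cP,\ldots,b_k+I_\cP\}$ of $L$) gives
$$
\#\{i\,:\,f(P_i)\neq 0\}\ \ge\ \min\{\sigma(w(\alpha))\,:\,\alpha\in\square_{\prec_w}(L)\}\ =\ \delta.
$$
Multiplying the two inequalities yields $\mathrm{w}_H(\mathbf{c})\ge\delta\hat d$, and since $f$ was an arbitrary nonzero element, this bounds the minimum distance of $C(I,L,\cP,\cC)$ from below.

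The step that requires the most care is the invocation of Theorem~\ref{th:dist} when $\cP$ may contain points of degree strictly larger than one, so that $f(P_i)$ lives in an extension $\FF_{q^{r_i}}$ rather than in $\fq$. The footprint-based estimate underlying Theorem~\ref{th:dist} only relies on the vanishing ideal $I_\cP$ of the evaluation set and on the order-domain conditions satisfied by $(I,\prec_w)$; it does not require the evaluation points to be $\fq$-rational. This is precisely the adaptation alluded to in Remark~\ref{rem:subset}, so the bound $\delta$ on the number of non-vanishing points is legitimate in our context. The remainder of the argument is the purely combinatorial concatenation estimate, and presents no further difficulty.
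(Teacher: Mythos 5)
Your proposal has the same two-step concatenation structure as the paper's proof: first bound from below by $\delta$ the number of indices $i$ with $f(P_i)\neq 0$, then observe that each nonzero block $\pi_i(f(P_i))$ is a nonzero codeword of $C_i$ and so contributes at least $d_i\geq\hat d$ to the weight. The second step is identical in both arguments. The difference lies in how the first step is justified, and this is exactly the point you yourself flag as delicate. The paper sets $r=\mathrm{lcm}(r_1,\dots,r_h)$, passes to $L'=Span_{\FF_{q^r}}B$, and applies Theorem~\ref{th:dist} to the affine-variety code over $\FF_{q^r}$ evaluated at $P_1,\dots,P_h$: over $\FF_{q^r}$ every $P_i$ is rational, the vanishing ideal of $\cP$ cuts out exactly the $h$ points, its footprint has exactly $h$ elements, and the resulting bound $\delta$ on the number of non-vanishing points descends to $ev(L)$ because $L\subseteq L'$. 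Your justification --- that the footprint estimate ``does not require the evaluation points to be $\fq$-rational'' --- is too quick as stated: if $I_\cP=I+J$ with $J\subseteq\fq[X_1,\dots,X_m]$, as in the setup, then $\mathcal{V}_{\overline{\FF}_q}(I_\cP)$ necessarily contains the whole Frobenius orbit of each $P_i$, hence has $\sum_i r_i$ points rather than $h$, and its footprint is correspondingly larger. Applying the footprint bound over $\fq$ then controls the number of non-vanishing \emph{conjugate} points, i.e.\ $\sum_{i\in S}r_i$ with $S=\{i\mid f(P_i)\neq 0\}$, rather than $|S|$ itself, so the clean inequality $|S|\geq\delta$ (with $\delta$ computed from the $h$-element footprint that the theorem statement and the worked example clearly intend) does not follow without passing to a field over which all the $P_i$ are rational. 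In short, the outline is right and matches the paper, but the one step you identify as requiring care is not actually closed by your one-sentence argument; inserting the paper's lcm/scalar-extension device is what repairs it.
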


\begin{proof}
Let $r=m.c.m.\{r_1,\dots,r_h\}$ and $B$ be a well-behaving basis for $L$. Consider 
$$
L'=Span_{\mathbb{F}_{q^r}}B
$$
and let $ev(L')\subseteq ({\mathbb{F}_{q^r}})^h$ (where $ev$ is as in (\ref{eq:ev})) be the affine variety code over ${\mathbb{F}_{q^r}}$ restricted at the points $P_1,\dots,P_h$. From Theorem \ref{th:dist}, the minimum distance of this code is at least $\delta$.

Note that $L\subseteq L'$, then for every non zero $c\in ev(L)$ we have $\mathrm{w}_H(c)\geq \delta$.

Let $\bar c \in C(I,L,\cP,\cC)\setminus\{0\}$, then $\bar c=(\pi_1(f(P_1),\ldots,\pi_h(f(P_h)))$ for some $f$. So let $S=\{i\,|\, f(P_i)\neq 0\}$, we have
$$
\mathrm{w}_H(c)=\sum_{i=1}^r\mathrm{w}_H(\pi_i(f(P_i)))=\sum_{i\in S}d_i\geq \delta\hat{d}.
$$
\end{proof}

\begin{remark}
We can estimate the minimum distance of the extended code $C(I,L,\cP,\cC)$ also if the order domain conditions are not satisfy. We can look at the number of one-way well-behaving pairs (see Def. 4.8 in \cite{CGC-cd-art-geil08}) as in Th. 4.9 in \cite{CGC-cd-art-geil08} . So we are able to obtain a bound similar to Theorem \ref{th:gagav}.
\end{remark}

\subsection{Extended Order Domain codes}
\label{sec:8}

Let $(R,\rho,\Gamma)$ be an order domain over $\fq$ and $B$ be a well-behaving basis for $R$. Consider $R'=Span_{\mathbb{F}_{q^r}}B$, then $(R',\rho,\Gamma)$ is an order domain over ${\mathbb{F}_{q^r}}$. Note that $R\subseteq R'$.

Now let $\phi:R'\to \mathbb{F}_{q^r}^h$ be a morphism $\phi=(\phi_1,\dots,\phi_h)$. For $i=1,\dots,h$ define $r_i=\min\{l\,|\,\phi_i(R)\subseteq\mathbb{F}_{q^l}\}$.

Let $\Delta=\Delta(R',\rho,\Gamma)$ be as in Definition \ref{def:del}. For $i=1,\dots,h$ let $\pi_i:\FF_{q^{r_i}}\to C_i$ be an $\fq$-linear isomorphism from the finite field $\FF_{q^{r_i}}$ onto the inner code $C_i$ over $\fq$ with parameters $[n_i,r_i,d_i]$.  
\begin{definition}
Let $\cC=\{C_1,\dots,C_h\}$ and $\cR=\{r_1,\dots,r_h\}$. For $\lambda\in\Gamma$ and $\delta\in\NN$ consider the codes
$$
{E}(\lambda,\cR,\cC) = Span_{\fq}\{(\pi_1(\phi_1(f_\eta)),\dots,\pi_h(\phi_h(f_\eta)))\,|\,\eta\preceq_{\NN^r}\lambda\}
$$
$$
\hat{E}(\delta,\cR,\cC) = Span_{\fq}\{(\pi_1(\phi_1(f_\eta)),\dots,\pi_h(\phi_h(f_\eta)))\,|\,\eta\in\Delta\mbox{ and }\sigma_\Delta(\eta)\geq\delta\}.
$$

\end{definition}

\begin{theorem}

The minimum distance of ${E}(\lambda,\cR,\cC)$ is at least
$$
\gamma\hat{d},
$$
where $\gamma=\min\{\sigma_\Delta(\eta)\,|\,\eta\preceq_{\NN^r}\lambda\}$ and $\hat{d}=\min\{d_1,\dots,d_h\}$.

The minimum distance of $\hat{E}(\delta,\cR,\cC)$ is at least $\delta \hat{d}$.
\end{theorem}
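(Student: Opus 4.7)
The plan is to mimic the proof of Theorem~\ref{th:gagav}, replacing the role of the affine-variety bound (Theorem~\ref{th:dist}) with the order domain bound (Theorem~\ref{th:boundord}). The key observation is that the basis $\{f_\lambda\}$ of $R$ is also a basis of $R'$, and the morphism $\phi=(\phi_1,\ldots,\phi_h):R\to \mathbb{F}_{q^r}^h$ extends $\mathbb{F}_{q^r}$-linearly to a morphism $R'\to \mathbb{F}_{q^r}^h$. Thus the original codes $E(\lambda)$ and $\tilde E(\delta)$ of Theorem~\ref{th:boundord} make sense over $\mathbb{F}_{q^r}$, and Theorem~\ref{th:boundord} gives them minimum distances at least $\gamma$ and $\delta$, respectively.

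First I would take a nonzero codeword $\bar c\in E(\lambda,\cR,\cC)$ and write it as
$$
\bar c=(\pi_1(\phi_1(f)),\ldots,\pi_h(\phi_h(f)))
$$
for some $f=\sum_{\eta\preceq_{\NN^r}\lambda} a_\eta f_\eta$ with $a_\eta\in \fq$. Viewing $f$ as an element of $R'\supseteq R$, its image under $\phi$ lies in $E(\lambda)\subseteq \mathbb{F}_{q^r}^h$. Since each $\pi_i$ is an $\fq$-linear isomorphism, $\bar c\neq 0$ forces $\phi(f)\neq 0$; by Theorem~\ref{th:boundord} the set $S=\{i\mid \phi_i(f)\neq 0\}$ then has cardinality at least $\gamma$.

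Next, for each $i\in S$ the image $\pi_i(\phi_i(f))$ is a nonzero codeword of $C_i$, hence has Hamming weight at least $d_i\geq \hat d$. Adding block by block,
$$
\mathrm{w}_H(\bar c)=\sum_{i=1}^h \mathrm{w}_H(\pi_i(\phi_i(f)))=\sum_{i\in S}\mathrm{w}_H(\pi_i(\phi_i(f)))\geq |S|\hat d\geq \gamma\hat d,
$$
which gives the first bound. The bound for $\hat E(\delta,\cR,\cC)$ follows verbatim, replacing $E(\lambda)$ with $\tilde E(\delta)$ and $\gamma$ with $\delta$, since Theorem~\ref{th:boundord} guarantees $|S|\geq \delta$ in that setting.

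The only delicate point (and the main thing to check carefully) is that the passage from $R$ to $R'$ is consistent: the field-of-definition indices $r_i=\min\{l\mid \phi_i(R)\subseteq \mathbb{F}_{q^l}\}$ must be compatible with the chosen $r$, so that $\mathbb{F}_{q^{r_i}}\subseteq \mathbb{F}_{q^r}$ and the values $\phi_i(f)$ landed in the target of $\pi_i$ in the first place; taking $r$ to be the least common multiple of the $r_i$'s, exactly as in the proof of Theorem~\ref{th:gagav}, ensures this. Nothing else in the order domain argument requires changes, so the estimate transfers block by block.
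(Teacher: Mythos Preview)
Your proposal is correct and is precisely the adaptation the paper has in mind: the paper's own proof consists of the single sentence ``Obvious adaption of the proof at Theorem~\ref{th:gagav},'' and you have spelled out exactly that adaptation, replacing the affine-variety bound of Theorem~\ref{th:dist} by the order-domain bound of Theorem~\ref{th:boundord} and carrying through the block-by-block weight count unchanged.
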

\begin{proof}
Obvious adaption of the proof at Theorem \ref{th:gagav}.
\end{proof}

\section{One-point GAG codes as Extended Order Domain codes}
\label{sec:9}

Now we consider the GAG codes constructed from a rational point of the curve, using as inner code $C_i=\mathbb{F}_q^{r_i}$ for $i=1,\dots,h$. We refer to these as one-point GAG codes.

Let $P$ be a rational point of a curve $\cX$ defined over a field $\fq$. Let $\nu_P$ be the valuation corresponding to $P$. Consider the algebraic structure
\begin{equation}\label{eq:alg}
R = \bigcup_{m=0}^\infty \cL(mP).
\end{equation}

Defining $\rho =-\nu_P$ we have $\rho(R) = \Gamma\cup\{-\infty\}$ where $\Gamma\subseteq \NN$ is known as the Weierstrass semigroup corresponding to $P$. By inspection $(R,\rho,\Gamma)$ is an order domain over $\fq$. 

Let $P_1,\dots,P_h$ be distinct points, with distinct closed points, of degree $r_1,\dots,r_h$, respectively.
Let $B$ be a well-behaving basis for $R$. Define $R'=Span_{\mathbb{F}_{q^r}}B$ and let $\phi:R'\to \mathbb{F}_{q^r}^h$ be a morphism with $\phi(f)=(f(P_1),\dots,f(P_h))$.
Then we have
$$
C(P_1,\dots,P_h,\lambda P,C_1,\dots,C_h)=C(I,L,\cP,\cC)=E(\lambda,\cR,\cC),
$$
where $L=\{f\,|\,\rho(f)\leq\lambda\}$, $\cP=\{P_1,\dots,P_h\}$, $\cR=\{r_1,\dots,r_h\}$ and $\cC=\{C_1,\dots,C_h\}$.

\begin{lemma}[Lemma 2 in \cite{CGC-cd-art-geil09}]
Let $\Gamma = \{\lambda_1,\lambda_2,\dots\}$ with $\lambda_1 <\lambda_2 <\dots$ be a numerical semigroup with finitely many gaps. For any $\lambda_i$ we have
$$
 \#(\Gamma\setminus(\lambda_i +\Gamma))=\lambda_i.
 $$
\end{lemma}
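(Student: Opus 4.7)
The plan is to split $\Gamma \setminus (\lambda_i + \Gamma)$ by size relative to $\lambda_i$ and exploit a bijection between large gaps of $\Gamma$ and small gaps, driven solely by the semigroup property.

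First I would note that since $\Gamma$ has finitely many gaps we may assume $\lambda_1 = 0$, and write $H = \mathbb{N}_0 \setminus \Gamma$ for the (finite) set of gaps. Because $\Gamma$ is closed under addition, $\lambda_i + \Gamma \subseteq \Gamma$, so
\[
\Gamma \setminus (\lambda_i + \Gamma) = \bigl(\Gamma \cap [0,\lambda_i)\bigr) \;\sqcup\; \{x \in \Gamma : x \ge \lambda_i,\ x - \lambda_i \notin \Gamma\}.
\]
The first piece has cardinality $\lambda_i - |H \cap [0,\lambda_i)|$, and the second piece equals $(H + \lambda_i) \cap \Gamma$. Since translation by $\lambda_i$ is injective, $|H + \lambda_i| = |H|$, hence
\[
|(H + \lambda_i) \cap \Gamma| \;=\; |H| - |(H + \lambda_i) \cap H|.
\]

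The crux is the identity $(H + \lambda_i) \cap H = H \cap [\lambda_i,\infty)$. The inclusion $\subseteq$ is immediate. For $\supseteq$, I would argue by contradiction: if $h \in H$ with $h \ge \lambda_i$ but $h - \lambda_i \in \Gamma$, then $h = (h - \lambda_i) + \lambda_i$ lies in $\Gamma$ by the semigroup property, contradicting $h \in H$. This is the one real step: it turns the semigroup axiom into a counting statement. Everything else is bookkeeping.

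Putting it together, $|(H + \lambda_i) \cap H| = |H| - |H \cap [0,\lambda_i)|$, so $|(H + \lambda_i) \cap \Gamma| = |H \cap [0,\lambda_i)|$, and summing the two pieces gives
\[
\#\bigl(\Gamma \setminus (\lambda_i + \Gamma)\bigr) = \bigl(\lambda_i - |H \cap [0,\lambda_i)|\bigr) + |H \cap [0,\lambda_i)| = \lambda_i,
\]
as required. The main obstacle I anticipate is purely notational: keeping the three sets $\Gamma \cap [0,\lambda_i)$, $(H + \lambda_i) \cap \Gamma$, and $(H + \lambda_i) \cap H$ straight. No deeper input about $\Gamma$ (e.g., its Frobenius number or conductor) is needed; only the semigroup closure is used.
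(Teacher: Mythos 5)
Your argument is correct, and there is nothing in the paper to compare it against: the lemma is quoted verbatim as Lemma~2 of the cited reference and no proof is reproduced here. Your decomposition of $\Gamma\setminus(\lambda_i+\Gamma)$ into the part below $\lambda_i$ and the part $(H+\lambda_i)\cap\Gamma$, together with the observation that $(H+\lambda_i)\cap H=H\cap[\lambda_i,\infty)$ (which is exactly where the semigroup closure enters), is sound and the bookkeeping checks out. For the record, the same translation idea gives a slightly shorter count: since $\lambda_i+\Gamma\subseteq\Gamma$, one has $\Gamma\setminus(\lambda_i+\Gamma)=\bigl(\mathbb{N}_0\setminus(\lambda_i+\Gamma)\bigr)\setminus\bigl(\mathbb{N}_0\setminus\Gamma\bigr)$, and the complement of $\lambda_i+\Gamma$ in $\mathbb{N}_0$ is the disjoint union $\{0,\dots,\lambda_i-1\}\sqcup(\lambda_i+H)$ of size $\lambda_i+|H|$, so removing the $|H|$ gaps of $\Gamma$ leaves exactly $\lambda_i$ elements. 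One cosmetic point: $\lambda_1=0$ is not an extra assumption you need to justify via finiteness of the gaps --- it holds by definition of a numerical semigroup, and in the paper's application $\Gamma$ is a Weierstrass semigroup, which contains $0=\rho(1)$.
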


\begin{theorem}\label{th:pointgag}
The minimum distance of $E(\lambda,\cR,\cC)$ is at least
$$
\min\{\sigma_\Delta(\eta)\,|\,\eta\leq\lambda\}\geq h-\lambda
$$
where $\Delta=\Delta(R',\rho,\phi)$. 

\end{theorem}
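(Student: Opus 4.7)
The conclusion concatenates two inequalities, and I would handle them in order.

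For the first inequality $d\bigl(E(\lambda,\cR,\cC)\bigr)\ge \min\{\sigma_\Delta(\eta):\eta\le\lambda\}$, I would simply invoke the general lower bound for extended order domain codes proved at the end of Section \ref{sec:8}, which yields $d\ge \gamma\hat d$ with $\gamma=\min\{\sigma_\Delta(\eta):\eta\le\lambda\}$ and $\hat d=\min_i d_i$. The running assumption of this section is that each inner code equals $\mathbb{F}_q^{r_i}$, i.e.\ the trivial $[r_i,r_i,1]$ code, so $d_i=1$ for every $i$ and $\hat d=1$; the first inequality is then immediate.

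For the second inequality $\min\{\sigma_\Delta(\eta):\eta\le\lambda\}\ge h-\lambda$, I would fix an arbitrary $\eta\in\Gamma$ with $\eta\le\lambda$ (these are exactly the indices used to form the well-behaving basis of $L$) and unwind the definition to
$$
\sigma_\Delta(\eta) \;=\; \bigl|\{\mu\in\Delta:\mu-\eta\in\Gamma\}\bigr| \;=\; |\Delta\cap(\eta+\Gamma)| \;=\; |\Delta|-|\Delta\setminus(\eta+\Gamma)|.
$$
Since the elements of $\Delta=\Delta(R',\rho,\phi)$ are $\rho$-values of basis vectors of $R'$, one has $\Delta\subseteq\Gamma$, and therefore $|\Delta\setminus(\eta+\Gamma)|\le|\Gamma\setminus(\eta+\Gamma)|$. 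The Weierstrass semigroup $\Gamma$ at a rational point of a non-singular curve of genus $g$ has exactly $g$ gaps, so the quoted lemma applies and gives $|\Gamma\setminus(\eta+\Gamma)|=\eta$. Combining this with $|\Delta|=h$, which holds because the surjective morphism $\phi:R'\to \mathbb{F}_{q^r}^h$ forces the algorithm of Definition \ref{def:del} to produce exactly $h$ indices, I obtain $\sigma_\Delta(\eta)\ge h-\eta\ge h-\lambda$. Taking the minimum over $\eta\le\lambda$ concludes.

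\textbf{Anticipated obstacle.} There is no substantial obstacle; the work is bookkeeping. The two points that most deserve care are (i) observing that the hypothesis $C_i=\mathbb{F}_q^{r_i}$ gives $\hat d=1$, which is what allows the bound $\gamma\hat d$ to collapse to $\gamma$, and (ii) identifying $|\Delta|=h$ by appealing to surjectivity of $\phi$ onto an $h$-dimensional target. Once these two identifications are made, both inequalities are one-line consequences of previously stated results.
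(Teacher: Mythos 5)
Your proposal is correct and follows essentially the same route as the paper's own proof: reduce to $\hat d=1$ since the inner codes are trivial, write $\sigma_\Delta(\eta)=|\Delta\cap(\eta+\Gamma)|=h-|\Delta\setminus(\eta+\Gamma)|$, and bound the latter by $\eta\le\lambda$ via the quoted lemma on $\#(\Gamma\setminus(\eta+\Gamma))$. You merely make explicit two steps the paper leaves implicit ($|\Delta|=h$ and $\Delta\subseteq\Gamma$), which is fine.
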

\begin{proof}
The distances of the inner codes are all equal to $1$. Consider $\lambda_i\in\Delta$, with $\lambda_i\leq\lambda$. We have $\sigma(\lambda_i)=\#(\Delta\cap(\lambda_i+\Gamma))$, the elements in $\Delta$ that are not in $\lambda_i+\Gamma$ are at most $\lambda_i$. Then $\sigma(\lambda_i)\geq h-\lambda_i\geq h- \lambda$.
\end{proof}

\begin{remark}
With order domain code it is possible, sometimes, to have a bound on the minimum distance of a one-point Algebraic Geometry code better than the Goppa bound \cite{CGC-cd-art-geil09}. So also for GAG codes, if we are in the case as in the Remark \ref{oss:dist}, using the order domains is possible to obtain a bound always at least as good as (and sometimes better than) the bound in the Proposition \ref{prop:XNL}.
\end{remark}

\begin{example}
Let $\mathbb{F}_4=\{0,1,\alpha,\alpha^2\}$, where $\alpha$ is a primitive element. Consider the plane curve of affine equation $\cX:X^6+Y^5+Y$. Let $\prec$ be the weighted degree lexicographic ordering given by $w(X) = 5, w(Y ) = 6$. Let $I=\langle X^6+Y^5+Y\rangle$, then $(I,\prec)$ satisfies the order domain conditions  and $w(\Delta(I))$ is the semigroup $\langle 5,6\rangle$.

We have $8$ $\mathbb{F}_4$-rational points
$$
\cV(I_4)=\{(0,0),(0,1),(1,\alpha),(1,\alpha^2),(\alpha,\alpha),(\alpha,\alpha^2),(\alpha^2,\alpha),(\alpha^2,\alpha^2)\}
$$
and $\mathcal G=\{Y^2+X^3+Y,XY^2+XY+X,Y^4+Y\}$ is a Gr\"obner basis for $I_4$. The monomials in the footprint of $I_4$ are
$$
\Delta(I_4)=\{1,X,Y,X^2,XY,Y^2,X^2Y,Y^3\}
$$
and its corresponding weights are
$$
w(\Delta(I_4))=\{0,5,6,10,11,12,16,18\}.
$$

Now we consider a point of the variety $\mathcal{V}_{\overline{\mathbb{F}}_4}(I)$ of degree $3$ (there are not points of degree $2$). 
Let $\mathbb{F}_{64}={\mathbb{F}_4}[Z]/\langle Z^3+Z+1\rangle$ and let $\beta^3=\beta+1$. The point that we consider is $(1,\beta^3)$.
Using Buchberger-M\"oller's algorithm we can compute the Gr\"obner basis of the vanishing ideal of the nine points, so we adjoint the monomial $X^3$ at the footprint and the weight $15$ to $w(\Delta(I_4))$.

Consider now $L=Span_{\fq}\{1,X,Y\}$, then the minimum distance of $C(I,L,\cP,\cC)$, where the inner code used are $C_1=\dots=C_8=\mathbb{F}_4$ and $C_9=\mathbb{F}_{4}^3$, is at least $\min\{\sigma(0),\sigma(5),\sigma(6)\}=5$. This value improves on what obtainable from the GAG construction, as follows.

Looking at this code as a one-point GAG code we can note that the semigroup $w(\Delta(I))$ is the Weiestrass semigroup of the unique rational point at infinity, $P_\infty$, of the curve and $L=\cL(6 P_\infty)$. Therefore the bound on minimum distance of the GAG code as in Proposition \ref{prop:XNL} is equal to $3$.

\end{example}

In \cite{CGC-cd-art-matsumoto99} was shown that an order domain with numerical weight function (i.e. the weights are in $\NN_0$) is a sub algebra of a structure as in (\ref{eq:alg}). If the semigroup related to the order domain are not numerical then they are related to structures of transcendence degree greater than one, that is, these structures are  curves no longer (\cite{CGC-alg-art-geipel02} Sec. 11). Examples of evaluation codes coming from higher dimensional objects than curves are given in \cite{CGC-cd-art-AndeGeil} and these codes can be viewed as generalizations of one-point AG codes. Then our extension can be consider a generalization of the one-point GAG codes.

\bibliography{RefsCGC}

\end{document}